\documentclass[12pt,reqno]{amsart}
\usepackage{amssymb, amsmath, amsfonts, amsthm, graphics}
\usepackage[hmargin=1 in, vmargin = 1 in]{geometry}
\usepackage{mathrsfs}
\usepackage{newpxtext}
\usepackage{newpxmath}
\usepackage{tikz-cd} 
\usepackage{enumitem}
\setlist{itemsep=0.5em}
\usetikzlibrary{matrix, calc, arrows} 
\usepackage[hyphens]{url}
\usepackage{hyperref}
\usepackage{cleveref}
\usepackage[all]{xy}
\usepackage{marginnote}




\newcommand{\isom}{\cong} 


\theoremstyle{definition}

\numberwithin{equation}{section}

\DeclareMathOperator{\Hom}{Hom}

\DeclareMathOperator{\Spec}{\text{Spec}}

\DeclareMathOperator{\Proj}{\text{Proj}}


\newcommand{\fpt}{\text{fpt}_\fm}



\newcommand{\NN}{\mathbb{N}}

\newcommand{\PP}{\mathbb{P}}
\newcommand{\QQ}{\mathbb{Q}}
\newcommand{\RR}{\mathbb{R}}

\newcommand{\ZZ}{\mathbb{Z}}


\newcommand{\cO}{\mathcal{O}}

\newcommand{\fm}{\mathfrak{m}}

\newcommand{\iL}{\mathscr{L}}
\newcommand{\iF}{\mathscr{F}}
\newcommand{\Fe}{F^{e}_{*}}

\theoremstyle{plain}
\newtheorem{thm}{Theorem}[section]
\newtheorem{Pn}[thm]{Proposition}
\newtheorem{Cor}[thm]{Corollary}
\newtheorem{lem}[thm]{Lemma}
\newtheorem*{mainthm}{Main Theorem}

\theoremstyle{definition}
\newtheorem{dfn}[thm]{Definition}

\newtheorem{eg}[thm]{Example}
\newtheorem{rem}[thm]{Remark}

\newtheorem{notation}[thm]{Notation}
\newtheorem{conjecture}[thm]{Conjecture}

	\title{The F-pure threshold versus the $\text{a}$-invariant for standard graded rings}
	\author{Suchitra Pande}
	\address[S.~Pande]{Department of Mathematics\\University of Utah\\Salt Lake City, 
		UT, 84112\\USA}
\email{\href{mailto:suchitra.pande@utah.edu}{suchitra.pande@utah.edu}}

\begin{document}

\begin{abstract}
    Hirose, Watanabe and Yoshida conjectured a criterion for a standard graded strongly $F$-regular ring to be Gorenstein in terms of the $F$-pure threshold. We complete the proof of this conjecture. We also prove natural extensions of the conjecture to section rings of normal, $F$-split projective varieties with respect to globally generated ample divisors. Our proof exploits the geometry of the Proj of the graded ring.  
\end{abstract}

	\maketitle

\section{Introduction}

The \emph{Frobenius} map provides a powerful tool to capture the singularity properties of prime characteristic rings. Using its properties, Hochster and Huneke introduced the class of \emph{strongly $F$-regular} rings (\Cref{def:FpureFreg}) as a class of mild singularities that are particularly well-behaved \cite{HochsterHunekeTightClosureAndStrongFRegularity}. For a strongly $F$-regular local (or $\NN$-graded) ring $(S, \fm)$ of prime characteristic $p>0$, Takagi and Watanabe introduced a numerical invariant of $S$ called the \emph{$F$-pure threshold} (\Cref{dfn:fpt}) measuring asymptotic properties of the Frobenius splittings of $S$ (\cite{TakagiWatanabeFPureThresh}). The $F$-pure threshold is a positive characteristic analog of the log canonical threshold for complex singularities and which plays a central role in the minimal model program (\cite{HaraYoshidaGeneralizationOfTightClosure}).

    The singularity notions defined via the Frobenius map interact in interesting ways with classical notions such as normal, Cohen-Macaulay, Gorenstein, etc. For instance, strongly $F$-regular rings are always normal and Cohen-Macaulay \cite{HochsterHunekeTightClosureAndStrongFRegularity}. While they are not always Gorenstein, Hirose, Watanabe and Yoshida proposed a conjecture to detect when a strongly $F$-regular standard graded ring $S$ is Gorenstein using the $F$-pure threshold \cite{HiroseWatanabeYoshidaConjecture}. This conjecture involves comparing the value of the $F$-pure threshold with the classical $\text{a}$-invariant of Goto and Watanabe:
    \begin{dfn} \cite{GotoWatanabeOnGradedRings}
        Let $S$ be a $d$-dimensional $\NN$-graded domain with homogeneous maximal ideal $\fm$. Then, the $\text{a}$-invariant of $S$ is defined to be
        \[\text{a}(S) = \min \{ n \in \ZZ \, | \, [H^d _\fm (S) ]_{-n} \neq 0 \}.\]
        Here, $H^d _ \fm (S) = \bigoplus_{n \in \ZZ} [H^d _\fm (S) ]_{n}$ denotes the $d^{\text{th}}$ local cohomology module of $S$ with support in the maximal ideal $\fm$, which we view as a $\ZZ$-graded module over $S$.
    \end{dfn}
    Then the conjecture of Hirose, Watanabe and Yoshida states:
\begin{conjecture} (\cite[Conjecture 1.1]{HiroseWatanabeYoshidaConjecture}) \label{mainconj}
    Let $k$ be an $F$-finite field and $S$ a standard graded algebra over $k$. Assume that $S$ is strongly $F$-regular. Then,
    \begin{enumerate}
        \item [(a)] we always have $\fpt(S) \leq - \text{a}(S)$, where $\fpt(S)$ denotes the $F$-pure threshold of $S$ with respect to $\fm$ (\Cref{dfn:fpt}). 
        \item [(b)] Moreover, $S$ is Gorenstein if and only if $\fpt(S) = -\text{a}(S)$.
    \end{enumerate}
\end{conjecture}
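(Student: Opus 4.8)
The plan is to reduce everything to the geometry of $X := \Proj S$. Since $S$ is strongly $F$-regular it is a normal Cohen--Macaulay domain, so $X$ is a normal, Cohen--Macaulay, globally $F$-regular projective $k$-variety; since $S$ is standard graded, $L := \mathcal{O}_X(1)$ is a globally generated ample line bundle, $S=\bigoplus_{n\geq 0}H^0(X,L^n)$, and we may write $L=\mathcal{O}_X(D)$ for a Cartier divisor $D$. Let $K_X$ be a canonical divisor. Because $\omega_S$ and its reflexive powers satisfy Serre's condition $(S_2)$ (hence are saturated), $\omega_S^{(m)}=\bigoplus_n H^0\!\big(X,\mathcal{O}_X(mK_X+nD)\big)$ for all $m\in\ZZ$ ($m$-th reflexive power on the left), and graded local duality ($H^d_{\fm}(S)^{\vee}\cong\omega_S$) identifies $-\mathrm a(S)$ with the least degree in which $\omega_S$ is nonzero:
\[
-\mathrm a(S)\;=\;\min\{\,m\in\ZZ : H^0(X,\mathcal{O}_X(K_X+mD))\neq 0\,\}\;=:\;\tau .
\]
In particular $K_X+\tau D$ is linearly equivalent to an effective divisor $F\geq 0$, while $K_X+(\tau-1)D$ is not; and $\tau\geq 1$, since $X$ is of Fano type so $H^0(X,\mathcal{O}_X(K_X))=0$. (The cases $\dim S\leq 1$ are immediate, $S$ being regular there, so assume $\dim S\geq 2$.)

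Next I would translate $\fpt(S)$ into the same language. Using $\fm^N=\bigoplus_{n\geq N}[S]_n$ and unwinding \Cref{dfn:fpt}, sharp $F$-purity of $(S,\fm^t)$ is governed by homogeneous splittings, so $\fpt(S)=\lim_{e\to\infty}\beta_e/p^e$ with
\[
\beta_e\;=\;\max\Big\{\,j : \exists\,\phi\in\big[\Hom_S(\Fe S,S)\big]_{-j/p^e}\ \text{with}\ \phi\big(\Fe[S]_j\big)\neq 0\,\Big\}.
\]
Then I would invoke the graded isomorphism $\Hom_S(\Fe S,S)\cong\Fe\big(\omega_S^{(1-p^e)}\big)$, valid because $S$ is normal with canonical module (combine $S=\EEnd_S(\omega_S)$, the projection formula $\Fe S\otimes_S M\cong\Fe(F^{e*}M)$, reflexivization, and the duality $\Hom_S(\Fe(-),\omega_S)=\Fe\Hom_S(-,\omega_S)$). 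Taking the degree $-j/p^e$ piece identifies it with $H^0\!\big(X,\mathcal{O}_X((1-p^e)K_X-jD)\big)$; hence every $j$ achieving $\beta_e$ satisfies $H^0\!\big(X,\mathcal{O}_X(-(p^e-1)K_X-jD)\big)\neq 0$.

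With this dictionary both parts are short. For (a): if $H^0(X,\mathcal{O}_X(-(p^e-1)K_X-jD))\neq 0$, then $-(p^e-1)K_X\sim jD+E$ with $E\geq 0$; since also $-K_X\sim\tau D-F$, we get $jD+E+(p^e-1)F\sim(p^e-1)\tau D$, and intersecting with the ample cycle $D^{d-2}$ (which pairs non-negatively with the effective divisors $E$ and $F$) gives $j\leq(p^e-1)\tau$; thus $\beta_e\leq(p^e-1)\tau$ and $\fpt(S)\leq\tau=-\mathrm a(S)$. For (b): if $S$ is Gorenstein then $\omega_S\cong S(\mathrm a(S))$, so $\Hom_S(\Fe S,S)$ is free over $\Fe S$ on a homogeneous generator $\Phi_e$ of degree $-\tau(p^e-1)/p^e$; since $S$ is $F$-split, some $\Fe(u)\cdot\Phi_e$ splits with $u\in[S]_{\tau(p^e-1)}$ and $\Phi_e(\Fe u)=1$, whence $\beta_e\geq\tau(p^e-1)$ and, with (a), $\fpt(S)=\tau$. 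Conversely, if $\fpt(S)=\tau$, intersecting $-(p^e-1)K_X\sim jD+E$ directly with $D^{d-2}$ gives $j/(p^e-1)\leq (-K_X\cdot D^{d-2})/(D^{d-1})$, so $\tau=\lim\beta_e/p^e\leq(-K_X\cdot D^{d-2})/(D^{d-1})$, while $K_X+\tau D\sim F\geq 0$ forces the reverse inequality; hence $F\cdot D^{d-2}=0$, and since a nonzero effective divisor meets a general complete-intersection curve $D^{d-2}$ positively, $F=0$. Thus $K_X\sim-\tau D$ is Cartier, $X$ is Gorenstein, and $\omega_S=\bigoplus_n H^0(X,L^{\,n-\tau})=S(-\tau)$ is free, so $S$ is Gorenstein. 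Carried out verbatim on an arbitrary normal $F$-split $(X,D)$ with $D$ globally generated ample, this also yields the generalization announced in the abstract.

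The hard part will be the middle step: establishing $\fpt(S)=\lim_e\beta_e/p^e$ for the precise invariant $\beta_e$ above — with the correct normalization of gradings and of the ceilings in the definition of sharp $F$-purity, and superadditivity of $\beta_e$ so the limit exists and equals the supremum — together with verifying that the graded isomorphism $\Hom_S(\Fe S,S)\cong\Fe(\omega_S^{(1-p^e)})$ carries no hidden degree shift. Once this cohomological translation is in place, parts (a) and (b) follow quickly from graded/Serre duality and the positivity of intersection numbers against ample classes; the only further inputs are the standard facts that globally $F$-regular varieties are Cohen--Macaulay and of Fano type.
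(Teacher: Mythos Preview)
Your approach is essentially the same as the paper's: both translate everything to $X=\Proj(S)$, identify $-\mathrm{a}(S)$ with the least $r$ for which $K_X+rL$ is effective, use Grothendieck duality for Frobenius to turn a degree-$(-j/p^e)$ homogeneous splitting into an effective Weil divisor linearly equivalent to $(1-p^e)K_X-jD$, and then compare these two effectivity statements via intersection with powers of the ample class. Your graded isomorphism $\Hom_S(\Fe S,S)\cong \Fe\,\omega_S^{(1-p^e)}$ is exactly the ring-theoretic incarnation of the paper's sheaf-level duality $\mathscr{H}om_{\cO_X}(\Fe\cO_X(D_f),\cO_X)\cong \Fe\cO_X((1-p^e)K_X-D_f)$; and your invariant $\beta_e$ is the paper's $\nu'_e$ (their Lemma on the section-ring formula for $\fpt$ is precisely the ``hard part'' you flag).

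One point to tighten: in the converse of (b) you freely write $K_X\cdot D^{d-2}$ and $F\cdot D^{d-2}$ even though $K_X$ (hence $F$ and $E$) need not be $\QQ$-Cartier. The paper singles this out explicitly: the $\QQ$-Cartier case is handled via the pseudoeffective cone, and the general case is treated in a separate lemma by observing that although $E$ and $F$ are only Weil divisors, the combination $E+(p^e-1)F\sim\big((p^e-1)\tau-j\big)D$ is Cartier, so Kleiman's intersection theory applies to it directly; one then bounds the contribution of any prime component of $F$ from below via restriction. Your argument survives once you interpret ``intersect with $D^{d-2}$'' as ``restrict to a general complete-intersection curve in the regular locus,'' but you should say this rather than appeal to an intersection number $K_X\cdot D^{d-2}$ that is not a priori defined. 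With that caveat, your proof and the paper's coincide.
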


The purpose of this paper is to complete the proof of this conjecture.
\begin{mainthm} (\Cref{mainthmgorcriterion}) \label{mainthmintro}
    Let $S$ be a standard graded, strongly $F$-regular ring over an $F$-finite field of characteristic $p>0$. Then, $S$ is Gorenstein if and only if the negative of the a-invariant of $S$ is equal to the $F$-pure threshold of $S$.
\end{mainthm}

In fact, our result holds more generally for $\NN$-graded section rings of normal, $F$-split projective varieties with respect to globally generated ample divisors (see \Cref{mainthmgorcriterion}).

\medskip

Earlier works towards this include:
\begin{itemize}
    \item The case when $S$ is a toric ring was proved in the original paper \cite{HiroseWatanabeYoshidaConjecture} which inspired the statement of the conjecture.
    \item The case when $S$ is a Hibi ring was proved by Chiba and Matsuda \cite{ChibaMatsudaFpurethresholdforHibirings}.
    \item Part (a) and the only if part of Part (b) of \Cref{mainconj} were both proved by De Stefani and N\'{u}\~{n}ez-Betancourt \cite{DeStefaniNunezBetancourtFthresholdGradedrings}. Their results (with the natural generalizations) also hold for the more general class of standard graded, normal and $F$-pure rings (instead of strongly $F$-regular).
    \item The remaining part of \Cref{mainconj}, which is the if part of Part (b), was proved conditionally by Singh, Takagi and Varbaro in \cite{SinghTakagiVarbaroGorenstienCriterion}, under the additional assumption that the anti-canonical ring of $S$ is finitely generated. We note that this condition is expected to be true for all strongly $F$-regular rings (\cite{DattaSchwedeTuckerFiniteGenerationofMonoidalgebras}), and is known for certain classes such as determinantal rings, strongly $F$-regular rings of dimension at most $3$, and $4$-dimensional strongly $F$-regular rings of characteristic $p>5$. The purpose of this paper is to prove \Cref{mainconj} unconditionally.
\end{itemize}

Our proof is geometric, and proceeds by reinterpreting the $F$-pure threshold and the $a$-invariant in terms of divisor classes on the projective variety $X = \Proj(S)$. This translation is inspired the works of Smith \cite{SmithFujitaFreenessForVeryAmple} and \cite{SmithGloballyFRegular}. The next key idea, following \cite{SchwedeFAdjunction}, is to use duality for the Frobenius map to associate a Weil-divisor $D_\varphi$ on $X$ to any homogeneous $p^{-e}$-linear map $\varphi: \Fe S \to S$. With this association, the proof of \Cref{mainthmgorcriterion} reduces to the observation that on a projective variety over a field $k$, the only divisor class $E$ that is at once effective and a limit of anti-effective classes is the trivial class (\Cref{Weilpseudoeffectivelemma}).

It is natural to seek generalizations of \Cref{mainconj} to other settings such as non-standard graded rings, or local rings. Note that the assumption on normality of $S$ can not be dropped as shown in \cite[Example 5.3]{DeStefaniNunezBetancourtFthresholdGradedrings}. A compelling generalization of \Cref{mainconj} to all normal and $F$-pure local rings was introduced in \cite{SinghTakagiVarbaroGorenstienCriterion} using an invariant called the \emph{$F$-injective} threshold to replace the $\text{a}$-invariant. Since our techniques involve the projective geometry of $\Proj(S)$, they do not seem to extend directly to this setting.

\medskip

\paragraph{\textbf{Acknowledgements.}} I would like to thank Karen Smith for inspiring conversations regarding many of the techniques used in this paper. I would also like to thank Benjamin Baily, Anna Brosowsky, Alessandro De Stefani, Havi Ellers, James Hotchkiss, Seugsu Lee, Devlin Mallory, Karl Schwede, Anurag Singh, Olivia Strahan, Shunsuke Takagi and Kevin Tucker for valuable conversations related to topics in this paper. This material is based upon work supported by the National Science Foundation under Grant No. DMS-1928930 and by the Alfred P. Sloan Foundation under grant G-2021-16778, while the author was in residence at the Simons Laufer Mathematical Sciences Institute (formerly MSRI) in Berkeley, California, during the Spring 2024 semester.

\section{Preliminaries}
\begin{notation} \label{standingnotation}
    Unless stated otherwise, all rings considered will be commutative, Noetherian and with a unit. Throughout, $k$ will denote an $F$-finite field of characteristic $p>0$ (which means that we have $[k:k^p] < \infty $).
\end{notation}

\subsection{The Frobenius map in characteristic $p>0$.}

Let $R$ be any ring of prime characteristic $p$. Then for any $e \geq 1$, let $F^e: R \to R$ sending $r \mapsto r^{p^e}$ be the $e^{\text{th}}$-iterate of the \emph{Frobenius morphism}.  Since $R$ has characteristic $p$, $F^e$ defines a ring homomorphism, allowing us to define a new $R$-module for each $e \geq 1$ obtained via restriction of scalars along $F^e$. We denote this new $R$-module by $F_{*} ^e R$ and its elements by $F_{*} ^e r$ (where $r$ is an element of $R$). Concretely, $F_{*} ^e R$ is the same as $R$ as an abelian group, but the $R$-module action is given by:
 $$ r\cdot F_* ^e s := F_{*} ^e (r^{p^e} s) \textrm{\quad for  $r\in R$ and $F_* ^e s \in F_* ^e R$}   .$$

The ring $R$ is called \emph{$F$-finite} if the Frobenius map $F$ on $R$ is a finite map.

\subsection{F-purity and F-regularity}

\begin{dfn}\cite{HochsterHunekeTightClosureAndStrongFRegularity} \label{def:FpureFreg}
      Let $R$ be a Noetherian $F$-finite ring of characteristic $p$. $R$ is said to be \emph{$F$-pure} if the map $R \to F_* R$ splits as a map of $R$-modules. And $R$ is said to be \emph{strongly $F$-regular} if for any element $c \in R$ that is not contained in any minimal prime of $R$, there exists an integer $e \gg 0$, such that, the following map
      \begin{equation*}
      \begin{split}
          R \to \Fe R \\
          1 \mapsto \Fe c
      \end{split}
      \end{equation*}   
      splits as a map of $R$-modules.
\end{dfn}

\begin{rem}
    By \cite{HochsterHunekeFRegularityTestElementsBaseChange}, a strongly $F$-regular ring is always, normal and Cohen-Macaulay.
\end{rem}

\subsection{The F-pure threshold}
Let $(R, \fm)$ be an $F$-finite local domain over a field $k$ of characteristic $p>0$.
\begin{dfn}[\cite{TakagiWatanabeFPureThresh}] \label{dfn:fpt}
    Assume that $R$ is $F$-pure. For each $e \geq 1$, let
    \[ \nu_e (\fm) = \max \{ n \, | \, \exists \, x \in \fm^n \, \text{such that the map } R \to \Fe R, \,  1 \mapsto \Fe x \, \text{ splits} \}. \]
    The $F$-pure threshold of $(R, \fm)$ with respect to the maximal ideal $\fm$ is defined to be the limit
    \[ \fpt(R) := \lim_{e \to \infty} \frac{\nu_e (\fm)}{p^e}. \]
\end{dfn}

\subsection{Section rings and modules}

 		\begin{dfn} \label{sectionringdfn}
		Let $k$ be a field and $X$ be a projective scheme over $k$. Given an ample invertible sheaf $\iL$ on $X$, the $\NN$-graded ring $S$ defined by
		$$ S =	S(X, \iL) :=  \bigoplus _{n \geq 0} H^{0}(X, \iL^{n})	$$
		is called the \emph{section ring} of $X$ with respect to $\iL$. The affine scheme $\Spec(S)$ is called the (affine) \emph{cone over $X$} with respect to $\iL$.

        Similarly, given a coherent sheaf $\iF$ on $X$, the $\ZZ$-graded module defined by
        \[ M(X, \iL) :=  \bigoplus _{n \in \ZZ} H^{0}(X, \iF \otimes \iL^{n})	  \]
    is called the \emph{section module} of $\iF$ with respect to $\iL$.
	\end{dfn}
\medskip

    \begin{eg}
        Let $X \subset \PP^N$ be a normal, projective subvariety of $\PP^N _k$. Then, the section ring of $X$ with respect to the very ample line bundle $\cO_X (1)$ is the normalization of the homogeneous coordinate ring $k[x_0, \dots, x_N]/I_X$ (where the $I_X$ is the homogeneous radical ideal corresponding to $X$). Note that even though the homogeneous coordinate ring is always standard graded (i.e., generated by degree one elements), the corresponding section ring, being the normalization, might not be standard graded.
    \end{eg}

In this paper, we will consider a slightly more general situation than the above example, namely with respect to globally generated invertible sheaves. Recall that an ample invertible sheaf $\iL$ is globally generated if and only if the corresponding section ring $S(X, \iL)$ admits a homogeneous system of parameters consisting of degree $1$ elements (see \cite[Section 1]{SmithFujitaFreenessForVeryAmple}).

\begin{dfn} \label{dfn:Ie}
    Suppose $S$ is a section ring over an $F$-finite field $k$ of characteristic $p>0$. Assume that $X = \Proj(S)$  is normal. Let $L$ denote an ample divisor corresponding to $\cO_X (1)$. Let $I_e (mL)$ denote the \emph{$e^{th}$-splitting subspace} of $S_m$ on $X$ defined as:
\[I_e (mL) = \{ x \in S_m \, | \, \varphi (\Fe x) = 0 \,  \forall \, \varphi \in \Hom_{\cO_X} (\Fe \cO_X(mL), \cO_X) \}. \]
\end{dfn}
Recall that $I_e (S) = \bigoplus_{m \geq 0} I_e (mL)$ is the splitting ideal of $S$. 

\begin{lem} \label{fptlimit}
    Suppose $S$ is the section ring of a normal projective variety $X$ (over $k$) with respect to a globally generated ample invertible sheaf $\iL$. Let $\fm \subset S$ denote the homogeneous maximal ideal, then the $F$-pure threshold of $S$ can be computed as follows:
    let
    \[  \nu'_e(S) := \max \{ r \, | \, S_{ \geq r} \not \subset I_e (S) \} = \max \{ r \, | \, I_e (\iL^r) \neq S_r \} . \]
    Then,
    \[ \fpt (S) = \lim _{e \to \infty} \, \frac{\nu'_e(S)}{p^e -1}.\]
\end{lem}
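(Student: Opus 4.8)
The plan is to reconcile the ``geometric'' formula $\lim_e \nu'_e(S)/(p^e-1)$ with the ``local'' definition $\fpt(S) = \lim_e \nu_e(\fm)/p^e$ from \Cref{dfn:fpt}. The first step is to unpack the splitting subspaces. Since $\iL$ is globally generated, the degree-one piece $S_1 = H^0(X, \iL)$ generates an $\fm$-primary ideal, and in fact $S$ has a homogeneous system of parameters in degree $1$; this will let me pass freely between powers of $\fm$ and the graded pieces $S_{\geq r}$. Concretely, I would first check that for a homogeneous element $x \in S_r$, the map $S \to \Fe S$, $1 \mapsto \Fe x$ splits as a map of $S$-modules if and only if some $\varphi \in \Hom_{\cO_X}(\Fe\cO_X(rL), \cO_X)$ sends $\Fe x$ to a unit, equivalently $x \notin I_e(rL)$. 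This is the graded/sheaf-theoretic translation of splittings: a homogeneous $p^{-e}$-linear map $\Fe S \to S$ restricts on each Proj to an $\cO_X$-linear map $\Fe \cO_X(rL) \to \cO_X$, and conversely such maps glue and extend (using normality of $X$ and that $S$ is the full section ring) to homogeneous maps $\Fe S \to S$; evaluating at $\Fe x$ and asking for a unit is the same as asking the composite $S \to \Fe S \to S$ to be surjective, i.e.\ split. So $\nu'_e(S) = \max\{ r : \exists\, x\in S_r,\ S \xrightarrow{1\mapsto \Fe x} \Fe S \text{ splits}\}$.

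Next I would relate $\nu'_e(S)$ to $\nu_e(\fm)$. On one hand, if $x \in S_r$ gives a splitting then $x \in \fm^r$ (as $x$ has degree $r$ and $\fm$ is generated in degrees $\geq 1$... more precisely $S_{\geq r} \subseteq \fm^{\lceil r/??\rceil}$ — here I use that $S$ has a degree-one h.s.o.p., so $\fm^r \supseteq \mathfrak{q}^r$ where $\mathfrak q = (S_1)$ and $S_{r} \subseteq \mathfrak q^{\,?}$); the clean statement is that $S_{\geq r} \subseteq \fm^{r}$ is false in general but $S_{\ge r}\subseteq \mathfrak q^{r-c}$ for a constant $c$ depending on generator degrees, and since here generators of $\fm$ over $\mathfrak q$ are bounded, $S_r \subseteq \fm^{r}$ up to an additive constant. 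This gives $\nu'_e(S) \leq \nu_e(\fm) + C$ for a constant $C$ independent of $e$. Conversely, given an element $y \in \fm^n$ whose associated map splits, I would pass to its top-degree homogeneous component or use that the splitting ideal $I_e(S)$ is a homogeneous ideal: $y \notin I_e(S)$ forces some homogeneous component $y_r \notin I_e(rL)$ with $r \geq n$, hence $\nu'_e(S) \geq \nu_e(\fm)$. (Graded-ness of $I_e(S)$ is immediate from its definition via homogeneous $\varphi$'s, or one invokes that $\nu_e(\fm)$ is computed by homogeneous elements since everything is graded.) Combining, $\nu_e(\fm) \leq \nu'_e(S) \leq \nu_e(\fm) + C$.

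Finally, I divide by $p^e$ and take the limit. From $\nu_e(\fm) \leq \nu'_e(S) \leq \nu_e(\fm) + C$ we get $\lim_e \nu'_e(S)/p^e = \lim_e \nu_e(\fm)/p^e = \fpt(S)$, and since $p^e/(p^e-1) \to 1$, also $\lim_e \nu'_e(S)/(p^e - 1) = \fpt(S)$. The equality $\max\{r : S_{\geq r} \not\subset I_e(S)\} = \max\{r : I_e(\iL^r) \neq S_r\}$ asserted in the statement is formal once one knows $I_e(S) = \bigoplus_r I_e(\iL^r)$ and that $S_r \neq I_e(\iL^r)$ implies $S_{r'} \neq I_e(\iL^{r'})$ for $r' \le r$ — the latter monotonicity following because multiplying a non-split element of degree $r$ by a degree-one parameter, or rather using that if $S_{\ge r}\not\subset I_e(S)$ then picking the non-split homogeneous element realizes the max. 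The main obstacle I anticipate is the careful bookkeeping in the middle step: establishing the two-sided bound $\nu_e(\fm) \le \nu'_e(S) \le \nu_e(\fm)+C$ with a constant $C$ genuinely independent of $e$, which is exactly where global generation (degree-one h.s.o.p.) is essential — without it $\fm$ and $(S_1)$ need not be comparable with a uniform bound, and the two limits could differ.
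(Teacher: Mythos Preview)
Your overall strategy matches the paper's: both establish a sandwich
\[
\nu_e(\fm) \;\leq\; \nu'_e(S) \;\leq\; \nu_e(\fm) + C
\]
with $C$ independent of $e$, then divide by $p^e$ (or $p^e-1$) and pass to the limit. The inequality $\nu_e \leq \nu'_e$ is handled the same way in both, via homogeneity of the splitting ideal and the trivial inclusion $\fm^n \subseteq S_{\geq n}$.

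The genuine difference is in how the reverse inclusion $S_{\geq r} \subseteq \fm^{r-C}$ is obtained. The paper argues cohomologically: $F$-purity of $S$ makes $X$ globally $F$-split, so Mehta--Ramanathan gives $H^i(X,\iL^m)=0$ for $i,m>0$; hence $\iL^d$ is $0$-regular relative to $\iL$, the multiplication maps $S_1 \otimes S_m \to S_{m+1}$ are surjective for $m \geq d$, and inductively $S_m \subseteq \fm^{m-d}$. Your route is more elementary and does not invoke $F$-purity at all: a degree-one homogeneous system of parameters $y_1,\dots,y_d$ makes $S$ a finite graded module over $k[y_1,\dots,y_d]$ with generators of degree $\leq N$, so directly $S_m \subseteq (y_1,\dots,y_d)^{m-N}S \subseteq \fm^{m-N}$. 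Both are valid; yours is arguably cleaner for this lemma and shows the formula holds whenever the limit on the right exists, while the paper's surjectivity of multiplication is reused in the Veronese corollary that follows and gives the explicit constant $C=d$. Your hesitation around ``$S_r \subseteq \fm^r$'' and the monotonicity remark at the end are unnecessary detours, but the core argument is sound.
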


\begin{proof}
    When $S$ is standard graded, this is just a restatement of \cite[Proposition~2.5]{SinghTakagiVarbaroGorenstienCriterion}, since $\fm^r = S_{\geq r}$. In the more general case, since $S$ is $F$-pure, we know that the projective variety $X$ is globally $F$-split. Then, as $\iL$ is ample we have the following cohomology vanishings by \cite[Proposition 1]{MehtaRamanathanFrobeniusSplittingAndCohomologyVanishing}:
    \[ H^i (X, \iL^m) = 0, \quad \forall \, i >0, \, m >0.\]
    In particular, since $\iL$ is globally generated, the sheaf $\iL^d$ is $0$-regular with respect to $\iL$, where $d = \dim (S) =  \dim(X) +1 $.  In particular, the multiplication maps 
    \begin{equation} \label{eqn:surjection} S_r \times S_m \twoheadrightarrow S_{m+r}\end{equation} are surjective whenever $r \geq 0$ and $m\geq d$. Therefore, for all $r \geq d$, we have inclusions
    \[ \fm^r \subset S_{\geq r} \subset \fm^{r -d}.  \]
    We refer to \cite[Section 1.8]{LazarsfeldPositivity1} for details regarding the use of Castelnuovo-Mumford regularity.

    With notation as in \Cref{dfn:fpt}, this shows that we have
    $\nu_e \leq \nu'_e \leq \nu_e + d$ for all $e$. Dividing by $p^e -1$ and taking a limit as $e \to \infty$, we obtain $\fpt(S ) = \lim_{e \to \infty} \frac{\nu' _e}{p^e -1}$.
    \end{proof}

\begin{Cor}
    Let $S$ be as in \Cref{fptlimit}. Then, for any $n \geq 1$ we have
    \[ \fpt(S^{(n)}) = \frac{\fpt(S)}{n}, \]
    where $S^{(n)}$ denotes the $n^{\rm{th}}$-Veronese subring of $S$.
\end{Cor}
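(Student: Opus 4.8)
The plan is to apply \Cref{fptlimit} to both $S$ and its $n$th Veronese subring $S^{(n)}$ and then to compare the two invariants $\nu'_e$. First one checks that \Cref{fptlimit} applies to $S^{(n)}$ as well: indeed $\Proj(S^{(n)}) = \Proj(S) = X$, the invertible sheaf $\cO_X(1)$ attached to the grading of $S^{(n)}$ is $\iL^n$, which is again globally generated and ample on the normal variety $X$, and $S^{(n)}$ is $F$-pure because the inclusion $S^{(n)} \into S$ splits as a map of $S^{(n)}$-modules (via the projection $\bigoplus_m S_m \to \bigoplus_m S_{nm}$), so that any $F$-splitting of $S$ restricts to one of $S^{(n)}$. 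Hence $\fpt(S^{(n)}) = \lim_{e\to\infty} \nu'_e(S^{(n)})/(p^e-1)$ with $\nu'_e(S^{(n)}) = \max\{\, r \mid I_e(\iL^{nr}) \neq (S^{(n)})_r \,\}$. Moreover, the splitting subspace $I_e(\iL^m) \subseteq S_m = H^0(X, \iL^m)$ of \Cref{dfn:Ie} is intrinsic to the sheaf $\iL^m$: the group $\Hom_{\cO_X}(\Fe \iL^m, \cO_X)$ and the elements $\Fe x$ ($x \in S_m$) do not depend on whether the degree is being measured against the ample divisor attached to $S$ or to $S^{(n)}$. Thus $I_e(\iL^{nr})$ computed inside $S^{(n)}$ coincides with $I_e(\iL^{nr})$ computed inside $S$, and so $\nu'_e(S^{(n)}) = \max\{\, r \mid I_e(\iL^{nr}) \neq S_{nr} \,\}$.

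Now I compare with $\nu'_e(S) = \max\{\, r \mid I_e(\iL^r) \neq S_r \,\} = \max\{\, r \mid S_{\geq r} \not\subset I_e(S) \,\}$. One inequality is immediate: $I_e(\iL^{nr}) \neq S_{nr}$ forces $nr \leq \nu'_e(S)$, hence $\nu'_e(S^{(n)}) \leq \lfloor \nu'_e(S)/n \rfloor$. For the reverse one, recall from the proof of \Cref{fptlimit} that the multiplication map $S_1 \times S_a \onto S_{a+1}$ is surjective for every $a \geq d := \dim S$. Since $I_e(S) = \bigoplus_{m\geq 0} I_e(\iL^m)$ is an ideal, this forces: if $S_a \subset I_e(S)$ with $a \geq d$, then $S_{a'} \subset I_e(S)$ for all $a' \geq a$. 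As $S_{\nu'_e(S)} \not\subset I_e(S)$, we conclude $S_a \not\subset I_e(S)$, i.e. $I_e(\iL^a) \neq S_a$, for every integer $a$ with $d \leq a \leq \nu'_e(S)$. Applying this with $a = n\lfloor \nu'_e(S)/n \rfloor$, which lies in $[\nu'_e(S)-n+1,\ \nu'_e(S)]$ and hence is $\geq d$ once $\nu'_e(S) \geq d+n$, shows $\nu'_e(S^{(n)}) \geq \lfloor \nu'_e(S)/n \rfloor$ for all such $e$. Therefore $\nu'_e(S^{(n)}) = \lfloor \nu'_e(S)/n \rfloor$ whenever $\nu'_e(S) \geq d+n$, while both $\nu'_e(S^{(n)})$ and $\nu'_e(S)/n$ lie in the interval $[0,(d+n)/n)$ otherwise; in all cases $\bigl| \nu'_e(S^{(n)}) - \nu'_e(S)/n \bigr|$ is bounded independently of $e$. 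Dividing by $p^e-1$ and letting $e \to \infty$ yields $\fpt(S^{(n)}) = \tfrac1n \lim_{e\to\infty} \nu'_e(S)/(p^e-1) = \fpt(S)/n$.

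I expect the only step with real content to be the reverse inequality $\nu'_e(S^{(n)}) \geq \lfloor \nu'_e(S)/n \rfloor$: it rests on the observation that, in degrees at least $\dim S$, membership of $S_a$ in the splitting ideal $I_e(S)$ is inherited by all higher degrees, so that the failure of $I_e(S)$ to contain $S_{\nu'_e(S)}$ descends to the largest multiple of $n$ not exceeding $\nu'_e(S)$. The rest is the routine observation that passing to the $n$th Veronese alters neither $X$ nor the invertible sheaves $\iL^m$, only their indexing by degree.
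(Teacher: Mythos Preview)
Your proof is correct and follows essentially the same strategy as the paper: both use the surjectivity of the multiplication maps $S_r \times S_m \to S_{r+m}$ for $m \geq d$ (from Castelnuovo--Mumford regularity, established in the proof of \Cref{fptlimit}) together with the fact that $I_e(S)$ is an ideal to sandwich $\nu'_e(S^{(n)})$ against $\nu'_e(S)/n$ up to a bounded error. The paper argues directly that $I_e(\iL^r) = S_r$ once $r \geq n(\nu'_e(S^{(n)})+1)+d$, yielding $\nu'_e(S)/n \leq \nu'_e(S^{(n)}) + 1 + d/n$, while you take the contrapositive route and obtain the sharper $\nu'_e(S^{(n)}) = \lfloor \nu'_e(S)/n \rfloor$ for $e$ large; the underlying mechanism is identical. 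Your version is a bit more careful in verifying that \Cref{fptlimit} applies to $S^{(n)}$ and that the splitting subspaces $I_e(\iL^{nr})$ agree whether computed in $S$ or in $S^{(n)}$, points the paper leaves implicit.
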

\begin{proof}
    With notation as in \Cref{fptlimit}, for any $e \geq 1$, we claim that
    \[ \nu'_e (S^{(n)})+1 + \frac{d}{n} \geq \frac{\nu'_e (S)}{n} \geq \nu'_e (S ^{(n)}) .\]
    The right-inequality follows immediately from \Cref{dfn:Ie}. For the left inequality, we note that by the definition of $\nu'_e$, for all $m \geq \nu_e '+1 $ we have $I_{e} (\iL^{mn}) = S_{mn}$. Suppose $I_e (\iL^r) \neq S_r$ for some $r \geq n(\nu _e ' +1 ) + d$. Let $f \in S_r \setminus I_e (\iL^r)$. Then, by the surjection \Cref{eqn:surjection}, we may write $f = \sum g_i h_i$, where $g_i \in S_{n(\nu_e ' + 1)}$. But this is a contradiction since $I_{e} (\iL^{n(\nu_e ' +1 )}) = S_{n (\nu_e ' + 1)}$ and $I_e$ is an ideal of $S$.  This proves the claim. The Corollary now follows by diving by $p^e$ and taking a limit as $e \to \infty $.
\end{proof}

\section{Proof of Main Theorem}

In this section, we will prove the main theorem of this paper in \Cref{mainthmgorcriterion} as conjectured in \cite{HiroseWatanabeYoshidaConjecture}. In fact, we will prove a slightly more general result that works for normal, $F$-pure section rings that admit a system of parameters by elements of degree $1$.

\begin{thm} \label{mainthmgorcriterion}
    Let $S$ be an $\NN$-graded section ring over an $F$-finite field $k$ of characteristic $p>0$ (\Cref{sectionringdfn}). Assume that $S$ is normal, $F$-pure and admits a homogeneous system of parameters of degree one elements. Also suppose that the dimension $d+1$ of $S$ is at least two and set $a : = -\text{a}(S)$. Then, we have
    $\fpt(S) \leq a$  and equality holds if and only if $S$ is quasi-Gorenstein.
\end{thm}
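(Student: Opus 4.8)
The plan is to translate both quantities $\fpt(S)$ and $a = -\operatorname{a}(S)$ into statements about divisors on $X = \Proj(S)$, and then to compare them using Frobenius duality together with the pseudoeffectivity lemma (\Cref{Weilpseudoeffectivelemma}) mentioned in the introduction. First I would set up the dictionary: writing $L$ for the ample (globally generated) divisor class attached to $\cO_X(1)$, and $K_X$ for a canonical divisor, the $\operatorname{a}$-invariant is computed by local duality, giving $a = -\operatorname{a}(S) = \min\{ n : H^0(X, \cO_X(K_X + nL)) \ne 0\}$, i.e. $a$ is the least $n$ for which $K_X + nL$ is linearly equivalent to an effective divisor. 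The ring $S$ is quasi-Gorenstein precisely when the canonical module of $S$ is free of some degree, which in the normal graded setting means $K_X \sim -aL$ (the section module of $\cO_X(K_X)$ is then, up to shift, a free $S$-module). So part of the work is purely bookkeeping with section modules of $\cO_X(K_X)$ and the definition of the $\operatorname{a}$-invariant.

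Next I would handle the $\fpt$ side. Using \Cref{fptlimit}, $\fpt(S) = \lim_e \nu'_e/(p^e-1)$ where $\nu'_e$ is the top degree in which $I_e(\iL^r) \ne S_r$. The key construction, following \cite{SchwedeFAdjunction}, is that a nonzero homogeneous $p^{-e}$-linear map $\varphi : \Fe S \to S$ of degree $-m$ corresponds (via Grothendieck–Serre duality for the finite morphism $F^e$ on the normal variety $X$, and the graded structure) to an effective Weil divisor $D_\varphi \geq 0$ on $X$ satisfying $(p^e - 1)(K_X) + \text{(twist by $m$ copies of $L$)} \sim D_\varphi$, more precisely $K_X \sim_{\QQ}$ a combination forcing $(p^e-1)K_X + mL \sim D_\varphi \geq 0$. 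Since $S$ is $F$-pure and admits the required system of parameters, such splitting maps exist, and $\nu'_e$ records the largest $m$ for which one can find $\varphi$ of degree $-m$ that still splits, i.e. $\nu'_e = \max\{ m : (p^e-1)K_X + mL \sim D \geq 0 \text{ for some } D\}$. Taking $m = \nu'_e$, dividing by $p^e - 1$, and letting $e \to \infty$, the classes $K_X + \frac{\nu'_e}{p^e-1}L$ are all effective, so their limit $K_X + \fpt(S)\cdot L$ is pseudoeffective (a limit of effective, hence of big+nef or at worst pseudoeffective, classes). On the other hand the very definition of $a$ says $K_X + aL$ is effective but $K_X + (a-\varepsilon)L$ is not effective for $\varepsilon > 0$; combined with ampleness of $L$ one deduces $K_X + (a-\varepsilon)L$ is not pseudoeffective for small $\varepsilon > 0$ — here one uses that subtracting a small ample from a non-effective class on a projective variety keeps it out of the pseudoeffective cone, or more robustly that the pseudoeffective threshold of $-K_X$ against $L$ equals $a$ because $H^0$ vanishing plus Serre vanishing control the relevant cohomology. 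Hence $\fpt(S) \leq a$.

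For the equality case: if $\fpt(S) = a$, then $K_X + aL$ is at once effective (from the $\operatorname{a}$-invariant side) and a limit of anti-effective classes — indeed, rescaling the duality picture, $\frac{1}{p^e-1}\big((p^e-1)K_X + \nu'_e L\big) - aL = -\big(a - \frac{\nu'_e}{p^e-1}\big)L$, a negative multiple of the ample $L$, shows $K_X + aL$ is a limit (as $e\to\infty$, since $\nu'_e/(p^e-1) \to a$) of classes of the form (effective) $-$ (anti-effective-approaching-$0$)... I would instead argue directly: $(p^e-1)(K_X+aL) = D_\varphi + (p^e - 1 - \text{something})\cdot(\text{anti-ample correction})$, exhibiting $K_X+aL$ as a limit of anti-effective $\QQ$-classes, so by \Cref{Weilpseudoeffectivelemma} (the only class that is both effective and a limit of anti-effective classes is trivial) we get $K_X + aL \sim 0$, i.e. $K_X \sim -aL$, which is exactly quasi-Gorenstein. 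Conversely, if $S$ is quasi-Gorenstein then $K_X \sim -aL$ and one checks the splitting maps of maximal degree realize $\nu'_e = (p^e-1)a$ for all $e$ (using $F$-purity to guarantee at least one splitting of the expected degree), giving $\fpt(S) = a$. The main obstacle, and where I would spend the most care, is making the divisor $D_\varphi$ construction precise in the \emph{graded} setting with the shift bookkeeping correct — matching the grading shift $-m$ of $\varphi$ with the twist $mL$ in the linear equivalence — and verifying that $\nu'_e$ really equals the maximal such $m$; the rest (local duality for the $\operatorname{a}$-invariant, and invoking \Cref{Weilpseudoeffectivelemma}) is comparatively formal.
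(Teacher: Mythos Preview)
Your overall plan---translate to $X = \Proj(S)$, reinterpret $a$ via $H^0(X, K_X + aL)$, attach an effective divisor $D_\varphi$ to each homogeneous splitting via Frobenius duality, and then invoke \Cref{Weilpseudoeffectivelemma}---is exactly the paper's strategy. However, your duality formula has the wrong sign on $K_X$: Grothendieck duality for the finite map $F^e$ gives $\mathscr{H}om_{\cO_X}(\Fe \cO_X(nL), \cO_X) \cong \Fe \cO_X((1-p^e)K_X - nL)$, so the effective divisor attached to a splitting of a degree-$n$ element satisfies $D_\varphi \sim (1-p^e)K_X - nL$, i.e.\ $\frac{1}{p^e-1}D_\varphi \sim_\QQ -K_X - \frac{n}{p^e-1}L$, not $+K_X + \frac{n}{p^e-1}L$. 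With your sign the quantity ``$\max\{m : (p^e-1)K_X + mL \text{ effective}\}$'' is $+\infty$ (adding more of an ample class only helps effectivity), and your inequality argument then rests on the false claim that the pseudoeffective threshold of $-K_X$ against $L$ coincides with the effective threshold $a$; in general a class can be pseudoeffective without being effective.

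With the correct sign the inequality is immediate and bypasses pseudoeffectivity entirely: fix once and for all an effective $D \sim K_X + aL$ (from the definition of $a$), and add it to $\frac{1}{p^e-1}D_\varphi$ to obtain
\[ D + \tfrac{1}{p^e-1}D_\varphi \;\sim_\QQ\; \big(a - \tfrac{n}{p^e-1}\big)L. \]
Both summands on the left are effective and $L$ is ample, so $a - \frac{n}{p^e-1} \geq 0$; taking $n = \nu'_e$ and letting $e\to\infty$ gives $\fpt(S) \leq a$. In the equality case, choosing $n_e$ with $\frac{n_e}{p^e-1} \to a$ places you exactly in the hypotheses of \Cref{Weilpseudoeffectivelemma} (with $D_e = \frac{1}{p^e-1}D_{\varphi_e}$ and $\lambda_e = a - \frac{n_e}{p^e-1} \to 0$), forcing $D = 0$ and hence $K_X \sim -aL$. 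So the fix is a single sign, but it replaces your cone argument for the inequality by the elementary observation that an effective $\QQ$-divisor linearly equivalent to a multiple of an ample class must have nonnegative coefficient.
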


\begin{dfn}
    Recall that the $a$-invariant of an $\NN$-graded ring $S$ (with homogeneous maximal ideal $\fm$) is defined by
\[-\text{a}(S) = \min \{ r \, | \, [H^{d+1} _\fm (S)]_{-r}  \neq 0 \} \]
where $d+1$ is the dimension of $S$.
\end{dfn} 

\begin{dfn}
    A normal local ring $R$ with a canonical module $\omega_R$ is called quasi-Gorenstein if $\omega_R \isom R.$ 
\end{dfn}

Our proof of \Cref{mainthmgorcriterion} uses the projective geometry of $\Proj (S)$. We first review some constructions that are helpful for translating between the algebraic and geometric interpretations.

\subsection{Graded canonical modules.} \label{gradedcanonicalmodule}
For an $\NN$-graded section ring $S$ of a normal projective variety $X$ with respect to ample invertible sheaf $\iL = \cO_X (1)$ (\Cref{sectionringdfn}), there is a natural choice of a graded module isomorphic to a canonical module of $S$.
Since $X$ is normal, we may consider the canonical divisor $K_X$ of $X$ by extending from the regular locus of $X$. Moreover, the reflexive sheaf $\cO_X(K_X)$ is isomorphic to the canonical sheaf of $X$. Then the section module $M(\cO_X (K_X), \iL)$ (\Cref{sectionringdfn}) is a reflexive module of rank $1$ over $S$ that is isomorphic to a canonical module of $S$. See \cite[Section 1]{SmithFujitaFreenessForVeryAmple} for a detailed discussion regarding the graded canonical module of a section ring.

Since any graded module that is isomorphic to $S_\fm$ after localization at $\fm$ must be isomorphic to a shift $S(r)$ for some $r >0$, $S$ being quasi-Gorenstein is thus equivalent to the condition that $K_X \sim r\iL$ as Weil-divisors on $X$, for some $r \in \ZZ$. 

\subsection{Homogeneous Splittings:} \label{homogenoussplittings} We will use the following notion of ``homogeneous splittings" over $S$: Since $S$ is an $\NN$-graded ring, for each $e \geq 1$, the $S$-algebra $\Fe S$ is naturally an $\frac{1}{p^e} \NN$-graded such that $F^e$ is a degree-preserving map. Moreover, for any $\ZZ$-graded $S$-module $N$, the module $ \Hom _S (\Fe S , N)$ is $\frac{1}{p^e}\ZZ$-graded with a decomposition given as
\begin{equation} \label{decompositionofHom} \Hom _S (\Fe S , N) =\bigoplus_{\frac{n}{p^e}\in \frac{1}{p^e}\ZZ}\Hom_S^{\text{gr}}\left(F^e_*S\left[\frac{-n}{p^e}\right], N\right) \end{equation}
where $F^e_*S\left[\frac{-n}{p^e}\right]$ is the module whose $\frac{m}{p^e}$ degree component (for any $m \in \ZZ$) is $(F^e_*S)_{\frac{m-n}{p^e}}$. Here, $\Hom_S^{\text{gr}}\left(F^e_*S\left[\frac{-n}{p^e}\right], N\right)$ is the space of degree-shifting maps $\Fe S \to N $ that send degree $\frac{- n + m p^e}{p^e}$ elements of $\Fe S$ to degree $m$ elements of $N$ and are zero on the remaining degrees.

This perspective is useful for geometric interpretations as follows: note that the $S$-module 
\[ M_{e, n} := \bigoplus _{m \in \ZZ} (F^e_*S)_{\frac{n + mp^e}{p^e}}  \]
is the section module (\Cref{sectionringdfn}) of the sheaf $\Fe \cO_X (nL)$, where $X = \Proj (S)$ with $L$ corresponding to the invertible sheaf $\cO_X (1)$. Thus, if we have a homogeneous element $f \in S_n$ of degree $n$ such that $f \notin I_e (S)$ (where $I_e$ is as in \Cref{dfn:Ie}), then there exists a homogeneous splitting $\varphi: \Fe S \to S$ such that $\varphi (\Fe f) = 1$, which corresponds to an element $\varphi \in \Hom_S^{\text{gr}}\left(F^e_*S\left[\frac{n}{p^e}\right], S\right) $. This homogeneous then induces a splitting $\tilde{\varphi}: \Fe \cO_X (D_f) \to \cO_X $, where $D_f$ denotes the divisor of zeroes of $f$. Thus, homogeneous splittings exactly correspond to splittings coming from sheaf maps on $X$. For a detailed discussion regarding this, we refer to \cite[Section~2]{LeePandeFsignaturefunction}.

\medskip

\begin{proof}[Proof of \Cref{mainthmgorcriterion}]
    First, we may reduce to the case of an algebraically closed base field $k$ by taking a flat base change $k \to \overline{k}.$ The $\text{a}$-invariant is preserved by flat base change for local cohomology, and the $F$-pure threshold of $S$ is preserved since the base change map $S \to S\otimes_k \overline{k}$ has regular fibers (see \cite[Theorem 5.6]{YaoObservationsAboutTheFSignature}). So we assume that $k = \overline{k}$ is algebraically closed.
    
    Now, we reinterpret all the terms involved in terms of $X: = \Proj (S)$ and $L$ be a Cartier divisor corresponding to $ \cO_X(1)$. Note that since $S$ is assumed to be normal, $X$ is normal. Recall that the local cohomology module $H^{d+1} _ \fm (S)$ is isomorphic to the graded module (since $d \geq 1$) 
    \[H^{d+1} _\fm (S) = \bigoplus _{r \in \ZZ} H^d (X, \cO_X(rL)). \] Therefore, the negative $\text{a}$-invariant of $S$ can be reinterpreted as $a = \min \{ r \, | \, [H^{d+1} _\fm (S)]_{-r} = H^d (X, \cO_X(-rL)) \neq 0 \}$.
    
    Furthermore, we consider the canonical divisor $K_X$ (by extending from the regular locus) of $X$, which gives us the corresponding reflexive sheaf $\cO_X (K_X)$ isomorphic to the canonical sheaf of $X$. In particular, by duality for the canonical sheaf (see \cite[Section 1]{SmithFujitaFreenessForVeryAmple} for a detailed discussion), we may rewrite: $ H^d (X, \cO_X(-rL))\isom H^0 (X, \cO_X(K_X + rL)).$ Therefore, the negative $\text{a}$-invariant is reformulated as
    \[ a = \min \{ r \, | \, H^0 (X, \cO_X(K_X + rL)) \neq 0 \}.  \]
    
    Therefore, by definition, we have $H^0 (X, K_X + aL) \neq 0$, which defines an effective Weil-divisor $D \geq 0$ such that
    \begin{equation} \label{eqn:DaL} D \sim K_X + aL.\end{equation}
    Thus, by the discussion in \Cref{gradedcanonicalmodule}, $S$ will be quasi-Gorenstein if we can show that $D = 0$ as a Weil divisor.

    Now, suppose we have a homogeneous element $f \in S$ of degree $m$ and let $D_f$ denote the Cartier divisor of zeroes corresponding to the section $f$ of $\cO_X (n)$. Then by the above discussion on homogeneous splittings, whenever $f \notin I_e (nL)$ (\Cref{dfn:Ie}) we obtain a homogeneous splitting $\varphi$ of $f$ giving us a global splitting 
    \[ \tilde{\varphi}: \Fe \cO_X(D_f) \to \cO_X  \]
    as $\cO_X$-modules.
    By the duality isomorphism
    \[ \mathscr{H}om _{\cO_X} (\Fe \cO_X(D_f) , \cO_X) \isom \Fe \cO_X ((1-p^e)K_X - D_f), \]
    for the Frobenius map, $\tilde{\varphi}$ in turn gives us a dual effective Weil divisor $D_\varphi \sim (1-p^e) K_X - D_f$.
     See \cite[Section~4.1]{SchwedeSmithLogFanoVsGloballyFRegular} for a detailed discussion regarding duality for the Frobenius map.

     Then, as $\QQ$-Weil divisors, we have the linear equivalence
    \[ \frac{1}{p^e -1} D_\varphi \sim_\QQ  -K_X - \frac{n}{p^e -1} L,\]
    since $D_f \sim nL$.
    Combining with \Cref{eqn:DaL}, we obtain
    \[ D+ \frac{1}{p^e -1} D_\varphi \sim_\QQ (a - \frac{n}{p^e -1})L.\]
    Now, since both $D$ and $D_\varphi$ are effective and $L$ is ample, this implies that
    \[ a - \frac{n}{p^e -1} \geq 0 \implies  \frac{n}{p^e -1} \leq a .\]
    Therefore, in the notation of \Cref{fptlimit}, we must have
    \[\frac{\nu'_e (S)}{p^e -1} \leq a  \]
    for each $e$ (since by definition, for each $e$, there is a non-zero element $f_e$ of degree $\nu'_e$ in $S$ such that $f_e \notin I_e (S)$).  Hence, by \Cref{fptlimit} we must have \[ \fpt (S) \leq a.\]
    \medskip
    
    Now, suppose $\fpt(S) = a$. Then, we may pick a sequence of homogeneous elements $f_e \notin I_e (n_e L)$ of degree $n_e$ of $S$ such that $\lim_{e \to \infty} \frac{n_e}{p^e -1} = a$. Then, by the previous discussion, we can construct effective Weil-divisors $D_e$ (corresponding to the splittings $\varphi_e$ of $D_{f_e}$) such that
    \begin{equation} \label{limitzero} D + \frac{1}{p^e - 1} D_e \sim_{\QQ} (a - \frac{n_e}{p^e -1}) L.\end{equation}
    We now claim that this forces $D$ to be the zero divisor. Assuming this claim for the moment, we note that we are done since this forces $-K_X \sim aL$ and in turn, $S$ is quasi-Gorenstein.

    Thus, it remains to show that \Cref{limitzero} forces $D$ to be the zero divisor. First, we assume that $K_X$ is $\QQ$-Cartier. Note that this implies that $D$ and $D_e$ are all $\QQ$-Cartier divisors. Now, we consider the classes of $D$ and $D_e$ in the N\'eron-Severi space $N^1 _\RR (X)$, denoted by $[D]$ and $[D_e]$. \Cref{limitzero} implies that in $N^1 _\RR (X)$,
    \[ \lim_{e \to \infty} [D+D_e] = \lim _{e \to \infty} [D]+ [D_e] = \lim _{e \to\infty} (a - \frac{m_e}{p^e -1}) [L] = 0.  \]
    Thus, we see that the class $[D]$ is the limit of $-[D_e]$, thus, $-[D]$ is pseudo-effective. But, $D$ was effective by assumption. Since there are no lines contained in the pseudo-effective cone of $X$, $[D]$ is forced to be zero (see \cite[Lemma 2.3]{CasciniHaconMustataSchwedeNumDimPseudoEffective}).
    Now, let $r \geq 0$ be such that $rD$ is Cartier. Then, since $[rD] = r[D]$ is zero, we must have that $rD$ is the zero divisor, because if not, for a general curve $C \subset X$ that intersects $D$ but is not contained in $D$, we must have that the degree of $rD|_C$ is positive, since it is effective. This completes the proof if $K_X$ is $\QQ$-Cartier.

    In the general case, we need a more careful version of the above argument, which is presented in \Cref{Weilpseudoeffectivelemma} and shows that in this case again, $D$ is forced to be zero which concludes the proof of the theorem.
\end{proof}

The following lemma was used in the proof above:
\begin{lem} \label{Weilpseudoeffectivelemma}
    Let $X$ be a normal projective variety and $L$ be an ample divisor on $X$. Suppose we have an effective Weil-divisor $D$, a sequence of effective $\QQ$-Weil divisors $(D_e)_{e \geq 1}$ and a sequence of non-negative rational numbers $ \lambda_e$ such that
    \begin{equation} \label{lemassumptionone} D+D_e \sim_{\QQ} \lambda_e L \end{equation}
    and
    \begin{equation} \label{lemassumptiontwo} \lim _{ e \to \infty} \lambda_e = 0.  \end{equation}
    Then, we necessarily have $D = 0$.
\end{lem}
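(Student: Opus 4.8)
The statement asks: given an effective Weil divisor $D$ on a normal projective variety $X$, effective $\QQ$-Weil divisors $D_e$, and non-negative rationals $\lambda_e \to 0$ with $D + D_e \sim_\QQ \lambda_e L$, conclude $D = 0$. The difficulty over the $\QQ$-Cartier case handled in the main proof is that $D$ itself need not be $\QQ$-Cartier, so we cannot directly pass to classes in $N^1_\RR(X)$. My plan is to test $D$ against a sufficiently general complete-intersection curve, reducing the problem to one dimension where effectivity forces non-negative degree.

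\textbf{Step 1: Reduce to a curve via Bertini.} Since $L$ is ample, a large multiple $nL$ is very ample; choose general members $H_1, \dots, H_{d-1} \in |nL|$ (where $\dim X = d$) and set $C := H_1 \cap \cdots \cap H_{d-1}$. By Bertini, for general choices $C$ is an irreducible curve contained in the smooth locus of $X$ and meeting $\mathrm{Supp}(D)$ properly — in particular $C \not\subset \mathrm{Supp}(D)$, and likewise $C \not\subset \mathrm{Supp}(D_e)$ for each fixed $e$ (one can arrange this for all $e$ simultaneously since there are only countably many components to avoid, or argue for each $e$ separately, which suffices). On the smooth curve $C$, the restriction $D|_C$ is a well-defined Cartier divisor on $C$, and since $C \not\subset \mathrm{Supp}(D)$ and $D \geq 0$, we have $\deg_C(D|_C) \geq 0$; similarly $\deg_C(D_e|_C) \geq 0$.

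\textbf{Step 2: Restrict the linear equivalence and take the limit.} Intersecting \Cref{lemassumptionone} with $C$ gives
\[ \deg_C(D|_C) + \deg_C(D_e|_C) = \lambda_e \cdot (L \cdot C) = \lambda_e \cdot n^{d-1} (L^d). \]
Both terms on the left are non-negative, so $0 \leq \deg_C(D|_C) \leq \lambda_e \cdot n^{d-1}(L^d)$ for every $e$. Letting $e \to \infty$ and using \Cref{lemassumptiontwo}, we conclude $\deg_C(D|_C) = 0$. Combined with effectivity of $D|_C$, this forces $D|_C = 0$ as a divisor on $C$, i.e. $C \cap \mathrm{Supp}(D) = \emptyset$.

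\textbf{Step 3: Conclude $D = 0$.} If $D \neq 0$, pick a prime component $\Gamma \subseteq \mathrm{Supp}(D)$; then $\Gamma$ is a subvariety of dimension $d-1 \geq 0$. But a general complete intersection $C = H_1 \cap \cdots \cap H_{d-1}$ of very ample divisors meets $\Gamma$ (since $\dim \Gamma + (d-1) \geq d = \dim X$, each successive general hyperplane section of $\Gamma$ is non-empty of the expected dimension, so $C \cap \Gamma \neq \emptyset$). This contradicts $C \cap \mathrm{Supp}(D) = \emptyset$ from Step 2. Hence $D = 0$.

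\textbf{Main obstacle.} The delicate point is Step 1: ensuring that the general curve $C$ can be chosen to avoid the supports of $D$ and of the $D_e$ appropriately while remaining irreducible and contained in the smooth locus, so that restriction of Weil divisors is literally intersection of Cartier divisors and degrees are honest non-negative integers. One must either handle the countably many $D_e$ by a Baire-category/general-point argument over the (irreducible) parameter space of complete intersections, or — cleaner — note that Step 2 only needs, for the fixed curve $C$ chosen to meet $\mathrm{Supp}(D)$ properly, the inequality $\deg_C(D|_C) \leq \lambda_e (L\cdot C)$, and $\deg_C(D_e|_C)\geq 0$ requires only $C \not\subset \mathrm{Supp}(D_e)$ for each individual $e$; since a single general $C$ lies in the smooth locus and avoids being a component of any of the countably many divisors $D_e$, this is fine. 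I would also remark that Step 3 is exactly the classical fact that an effective Weil divisor meeting every general complete-intersection curve trivially is zero, which is the non-$\QQ$-Cartier analogue of "no lines in the pseudo-effective cone" used in the main proof.
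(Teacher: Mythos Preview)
Your proof is correct and rests on the same underlying inequality as the paper's, namely that intersecting $D$ with $L^{d-1}$ yields a fixed positive number bounded above by $\lambda_e(L^d)\to 0$. The route is genuinely different, though. The paper does not pass to a curve at all: it clears denominators so that $Z_e:=b_e(D+D_e)\sim a_eL$ is an effective \emph{Cartier} divisor, and then applies Kleiman's numerical intersection theory directly, using additivity over components (\Cref{interpn}) to get $a_e(L^d)=(L^{d-1}\cdot Z_e)\geq b_e(L|_E^{d-1})>0$ for any prime component $E$ of $D$. This avoids Bertini entirely and hence works uniformly over any base field without further reduction. Your approach trades that machinery for the more hands-on geometry of restricting to a general complete-intersection curve in the smooth locus; this is arguably more intuitive, but it carries two small costs you should make explicit. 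First, Bertini needs an infinite field, so you should base-change (harmless, since $D=0$ is stable under field extension). Second, your claim that a \emph{single} general $C$ avoids all the countably many supports $\mathrm{Supp}(D_e)$ can fail over countable fields such as $\overline{\FF_p}$; the clean fix is to note that $\deg_C(D|_C)$ is constant on the irreducible parameter space of general complete intersections (it is the degree of the line bundle $\cO_{X^{\mathrm{sm}}}(D)$ restricted to a flat family of curves), so you may vary $C=C_e$ with $e$ to avoid $\mathrm{Supp}(D_e)$ while keeping the left-hand side fixed. With those two remarks added, your argument is complete and is a pleasant geometric counterpart to the paper's intersection-theoretic one.
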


To prove this, we will use intersection theory of divisors.

\subsection*{Numerical Intersection Theory}
	
	We review Kleiman's numerical intersection theory of divisors as developed in \cite{Kleimannumericaltheory}. We only state the definition and main facts that we need here.
	
	Let $Y$ be a proper scheme over a field $K$ of dimension $d$. Let $\iL_{1}, \dots, \iL_{r}$ be line bundles on $Y$, $D$ an effective Cartier divisor and $\iF$ a coherent sheaf on $Y$. Then we have the following theorem due to Snapper.
	
	\begin{thm}{\cite{Kleimannumericaltheory}} \label{Snapper}
		Consider the function $f(m_{1}, \dots, m_{r}) = \chi (\iF \otimes \iL_{1} ^{m_{1}} \otimes  \dots \otimes \iL_{r}^{m_{r}})$ for $m_{1}, \dots, m_{r} \in \ZZ$, where $\chi$ denotes the Euler characteristic (over $K$). Then there is a polynomial $P(x_{1}, \dots, x_{r})$ with coefficients in $\QQ$ and of total degree $\leq s = \dim \mathrm{Supp}(\iF)$ such that $P(m_{1}, \dots, m_{r}) = f(m_{1}, \dots, m_{r})$ for all $m_{1}, \dots, m_{r} \in \ZZ$.
	\end{thm}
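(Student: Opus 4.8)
The plan is the classical argument of Snapper and Kleiman (see \cite{Kleimannumericaltheory}): isolate a purely combinatorial lemma, then feed in the geometry through an induction on $s = \dim \mathrm{Supp}(\iF)$. The combinatorial lemma I would prove first is the following: a function $g \colon \ZZ^r \to \QQ$ agrees with a polynomial of total degree $\le n$ provided each first difference $(\Delta_i g)(\mathbf m) := g(\mathbf m + \mathbf e_i) - g(\mathbf m)$ agrees with a polynomial of total degree $\le n-1$. For $r = 1$ this follows by ``integrating'' with the identity $\sum_{k=0}^{N-1}\binom{k}{a} = \binom{N}{a+1}$; for $r > 1$ one first constructs a polynomial $P$ of total degree $\le n$ with $\Delta_r P = \Delta_r g$ by integrating in the last variable, then observes that $g - P$ is independent of $x_r$ and applies the inductive hypothesis to it as a function of $x_1, \dots, x_{r-1}$.

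Next I would run the induction on $s = \dim \mathrm{Supp}(\iF)$, over \emph{all} proper schemes over $K$ and \emph{all} coherent sheaves; the case $\iF = 0$ is trivial. A standard dévissage, together with additivity of $\chi$ on short exact sequences and the inductive hypothesis, reduces the statement to the case $\iF = (\iota_Z)_* \cO_Z$ for an integral closed subscheme $Z \subseteq Y$ with $\dim Z = s$, where now $f(\mathbf m) = \chi\big(\bigotimes_j (\iL_j|_Z)^{m_j}\big)$, computed on $Z$. By Chow's lemma choose a projective integral $Z'$ and a projective birational morphism $\pi \colon Z' \to Z$ that is an isomorphism over a dense open of $Z$. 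The projection formula identifies $R\pi_*\big(\pi^*\bigotimes_j(\iL_j|_Z)^{m_j}\big)$ with $R\pi_*\cO_{Z'} \otimes \bigotimes_j (\iL_j|_Z)^{m_j}$, and the Leray spectral sequence together with the exact sequence $0 \to \cO_Z \to \pi_*\cO_{Z'} \to \cQ \to 0$ — in which $\cQ$ and every $R^{q}\pi_*\cO_{Z'}$ with $q \ge 1$ is supported in dimension $\le s-1$ — expresses $f(\mathbf m)$ as $\chi\big(\bigotimes_j (\pi^*\iL_j)^{m_j}\big)$ (computed on $Z'$) plus an alternating sum of Euler characteristics of coherent sheaves on $Z$ with support of dimension $\le s-1$. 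By the inductive hypothesis those correction terms are polynomials of total degree $\le s-1$, so it suffices to prove the theorem for $\cO_{Z'}$ on the projective integral scheme $Z'$ with the line bundles $\cM_j := \pi^*\iL_j$ (which need not be ample — ampleness plays no role).

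For this last case, if $s = 0$ then $Z'$ is a point and $f$ is constant. If $s \ge 1$, fix a very ample $\cO_{Z'}(1)$ and write each $\cM_i \cong \cO_{Z'}(A_i) \otimes \cO_{Z'}(B_i)^{-1}$ with $A_i, B_i$ effective divisors in the very ample linear systems $|\cM_i \otimes \cO_{Z'}(N)|$ and $|\cO_{Z'}(N)|$ for $N \gg 0$. Since $Z'$ is integral of dimension $\ge 1$, both $A_i$ and $B_i$ automatically meet $Z'$ in dimension $\le s-1$, so no genericity argument or enlargement of $K$ is needed. Tensoring the short exact sequences $0 \to \cO_{Z'} \to \cO_{Z'}(A_i) \to \cN_i \to 0$ and $0 \to \cO_{Z'}(-B_i) \to \cO_{Z'} \to \cP_i \to 0$ (with $\cN_i = \cO_{Z'}(A_i)/\cO_{Z'}$ and $\cP_i = \cO_{Z'}/\cO_{Z'}(-B_i)$, both supported in dimension $\le s-1$) by the \emph{locally free} sheaf $\cL^{\mathbf m} := \bigotimes_j \cM_j^{m_j}$ keeps them exact — so no $\TTor$ terms appear — and taking Euler characteristics gives, writing $g(\mathbf m) := \chi(\cL^{\mathbf m})$,
\[
(\Delta_i g)(\mathbf m) \;=\; \chi\big(\cN_i \otimes \cL^{\mathbf m}\big) \;-\; \chi\big(\cP_i \otimes \cL^{\mathbf m} \otimes \cO_{Z'}(A_i)\big).
\]
Each right-hand term is the Snapper function of a fixed coherent sheaf on $Z'$ with support of dimension $\le s-1$, hence a polynomial in $\mathbf m$ of total degree $\le s-1$ by the inductive hypothesis; so the same holds for $\Delta_i g$ for every $i$. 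By the combinatorial lemma (with $n = s$), $g = \chi(\cL^{\mathbf m})$ agrees with a polynomial of total degree $\le s$, and combining with the previous paragraph, so does $f$. This closes the induction.

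The genuine difficulty is organizational rather than computational. One must run the induction over all proper $K$-schemes and all coherent sheaves (not merely the given pair $Y, \iF$), so that the lower-dimensional sheaves produced by the two reduction steps are covered by the inductive hypothesis; and Chow's lemma is essential — not a technical nuisance — because a proper scheme need not carry enough effective Cartier divisors to cut down the support directly, whereas after Chow's reduction the target is an integral \emph{projective} scheme on which a single hyperplane section already does the job. The combinatorial lemma, the dévissage, the projection-formula and Leray bookkeeping, and the exact-sequence computations are all routine once this scaffolding is in place.
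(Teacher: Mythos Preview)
The paper does not prove this theorem: it is quoted from \cite{Kleimannumericaltheory} as background and no argument is given. Your sketch is a correct rendition of the classical Snapper--Kleiman proof from that reference --- the combinatorial finite-difference lemma, d\'evissage to integral subschemes, Chow's lemma to pass to a projective model, and the hyperplane-section computation of $\Delta_i g$ --- so there is nothing to compare; your proposal simply supplies the omitted proof.
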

	
	\begin{dfn} {\cite[Section 2, Definition 1]{Kleimannumericaltheory}}
		Suppose that $\dim Y \leq r $, then we define the intersection number $ 	(\iF; \iL_{1} \cdot \ldots \cdot \iL_{r})$ to be the coefficient of the term $x_{1} \cdots x_{r}$ in the polynomial $P(x_{1}, \dots, x_{r})$ as in Theorem \ref{Snapper}.
	\end{dfn}
	
	For any coherent sheaf $\iF$ with $\dim (\mathrm{Supp}(\iF)) \leq r$, this defines an integer valued multi-linear form on Pic$(Y)^{r}$. If a line bundle $\iL$ is defined by a Cartier divisor $D$, then we sometimes write $D$ instead of $\iL$ in the intersection form. We need the following properties of intersection numbers:
	
	\begin{Pn} \label{interpn}
		\begin{enumerate}

			\item (\cite[Section 2, Proposition 4]{Kleimannumericaltheory}) \label{intersection1} If $\iF$ is a locally free sheaf, and $D$ is a Cartier divisor on $X$, then $$(\iF; \iL_{1} \cdot \ldots \cdot \iL_{d-1} \cdot \cO_{Y}(D )) =  (\iF|_{D} ; \iL_{1}|_{D} \cdot \ldots \cdot \iL_{d-1}|_{D}) $$

			\item (\cite[Section 2, Proposition 5, Corollary 1,2]{Kleimannumericaltheory}) \label{intersection4} Let $V = \mathrm{Supp}(\iF)$ and $V_{1}, \dots, V_{s}$ its irreducible components. Let $l_{i} = \mathrm{length}(\iF \otimes \cO_{V_{i}} )$ where $\cO_{V_{i}}$ is the stalk of $\cO_{Y}$ at the generic point of $V_{i}$, then, if $r \geq \dim(V)$
			$$ (\iF;\iL_{1} \cdot \ldots \cdot \iL_{r}) = \sum _{i=1} ^{s} l_{i} \times (\iL_{1}|_{V_{i}} \cdot \ldots \cdot \iL_{r}|_{V_{i}})		$$
			In particular, if $\iF$ is an invertible sheaf, then $(\iF; \iL_{1} \cdot \ldots \cdot \iL_{d}) = (\iL_{1} \cdot \ldots \cdot \iL_{d})$.
		\end{enumerate}
		
	\end{Pn}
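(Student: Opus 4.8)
The plan is to derive both parts of \Cref{interpn} from Snapper's polynomial (\Cref{Snapper}) together with two formal facts used throughout: the Euler characteristic is additive on short exact sequences, and tensoring by an invertible sheaf is exact. For the first part I would compare the polynomial defining the left-hand intersection number with the one defining the right-hand number by taking a ``first difference'' in the $\cO_Y(D)$-direction. Tensoring the structure sequence $0 \to \cO_Y(-D) \to \cO_Y \to \cO_D \to 0$ of the effective Cartier divisor $D$ with the locally free sheaf $\iF \otimes \iL_1^{m_1}\otimes\cdots\otimes\iL_{d-1}^{m_{d-1}}\otimes\cO_Y(m_d D)$ yields a short exact sequence for every $(m_1,\dots,m_{d-1},m_d)$, so by additivity of $\chi$,
\[ P(m_1,\dots,m_{d-1},m_d) - P(m_1,\dots,m_{d-1},m_d-1) = \widetilde P(m_1,\dots,m_{d-1},m_d), \]
where $P$ is the Snapper polynomial for $(\iF;\iL_1\cdots\iL_{d-1}\cdot\cO_Y(D))$, of total degree $\le d$, and $\widetilde P$ is the Snapper polynomial on $D$ for $(\iF|_D;\iL_1|_D,\dots,\iL_{d-1}|_D,\cO_Y(D)|_D)$, of total degree $\le \dim D \le d-1$; here I use that pushforward along the closed immersion $D \into Y$ preserves Euler characteristics, so $\chi_Y$ of the quotient term is $\chi_D$ of its restriction. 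I would then read off the coefficient of $x_1\cdots x_{d-1}$ (i.e.\ of $x_1\cdots x_{d-1}x_d^0$) on both sides: on the right it is the coefficient of $x_1\cdots x_{d-1}$ in $\widetilde P(x_1,\dots,x_{d-1},0)$, which is $(\iF|_D;\iL_1|_D\cdots\iL_{d-1}|_D)$; on the left, because $P$ has total degree $\le d$ every monomial $x_1\cdots x_{d-1}x_d^{k}$ with $k\ge 2$ has zero coefficient, so the $x_d$-constant part of $P(\mathbf x,x_d)-P(\mathbf x,x_d-1)$ isolates exactly the coefficient of $x_1\cdots x_{d-1}x_d$ in $P$, namely $(\iF;\iL_1\cdots\iL_{d-1}\cdot\cO_Y(D))$. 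Comparing proves the first part.

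For the second part I would first record two consequences of \Cref{Snapper}. \emph{Additivity}: for a short exact sequence $0\to\iF'\to\iF\to\iF''\to 0$ of coherent sheaves with all supports of dimension $\le r$, tensoring by $\iL_1^{m_1}\otimes\cdots\otimes\iL_r^{m_r}$ is exact, so the Snapper polynomials add and hence $(\iF;\iL_1\cdots\iL_r)=(\iF';\iL_1\cdots\iL_r)+(\iF'';\iL_1\cdots\iL_r)$. \emph{Vanishing}: if $\dim\mathrm{Supp}(\iF)<r$ then the Snapper polynomial has total degree $<r$, so $(\iF;\iL_1\cdots\iL_r)=0$. I would also note that for a closed immersion $\iota\colon Z\into Y$, the projection formula and exactness of $\iota_*$ give $(\iota_*\mathcal G;\iL_1\cdots\iL_r)=(\mathcal G;\iota^*\iL_1\cdots\iota^*\iL_r)$ (the Snapper polynomials are literally equal); in particular $(\iota_{V*}\cO_V;\iL_1\cdots\iL_r)=(\iL_1|_V\cdots\iL_r|_V)$ for an integral closed subscheme $V$.

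Next I would apply d\'evissage: $\iF$ admits a finite filtration $0=\iF_0\subset\iF_1\subset\cdots\subset\iF_n=\iF$ with each quotient $\iF_j/\iF_{j-1}\isom(\iota_{Z_j})_*\cO_{Z_j}$ for an integral closed subscheme $Z_j\subseteq\mathrm{Supp}(\iF)$, so by additivity $(\iF;\iL_1\cdots\iL_r)=\sum_j\big((\iota_{Z_j})_*\cO_{Z_j};\iL_1\cdots\iL_r\big)$. Any $Z_j$ that is not an irreducible component of $\mathrm{Supp}(\iF)$ is strictly contained in one, so $\dim Z_j < \dim\mathrm{Supp}(\iF)\le r$ and its term vanishes by the vanishing fact; components $V_i$ with $\dim V_i<r$ likewise satisfy $(\iL_1|_{V_i}\cdots\iL_r|_{V_i})=0$, so both sides of the claimed identity see only the $r$-dimensional components, and for such a component $Z_j=V_i$ the corresponding term equals $(\iL_1|_{V_i}\cdots\iL_r|_{V_i})$ by the projection-formula remark. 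Finally I would count multiplicities by localizing the whole filtration at the generic point $\eta_i$ of $V_i$: the quotient $(\iota_{Z_j})_*\cO_{Z_j}$ localizes to $0$ unless $V_i\subseteq Z_j$, which (as $V_i$ is a maximal irreducible subset of $\mathrm{Supp}(\iF)$) forces $Z_j=V_i$, in which case it localizes to the residue field $\kappa(\eta_i)$, of length $1$ over $\cO_{Y,\eta_i}$; since $\iF_{\eta_i}$ has finite length $l_i=\mathrm{length}_{\cO_{Y,\eta_i}}(\iF_{\eta_i})$, exactly $l_i$ indices $j$ have $Z_j=V_i$. Summing gives $(\iF;\iL_1\cdots\iL_r)=\sum_i l_i\,(\iL_1|_{V_i}\cdots\iL_r|_{V_i})$, and the ``in particular'' is the case $r=d=\dim Y$ with $\iF$ invertible (then $\iF_{\eta_i}\isom\cO_{Y,\eta_i}$, so $l_i$ is the multiplicity of $\cO_Y$ along $V_i$ and the sum is by definition $(\iL_1\cdots\iL_d)$).

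The main obstacle is the multiplicity bookkeeping in the second part: correctly matching the number of times a top-dimensional component $V_i$ appears among the d\'evissage quotients with the local length $l_i$, and confirming that all lower-dimensional d\'evissage quotients contribute nothing (and, if one uses the d\'evissage whose quotients are pushforwards of ideal sheaves rather than structure sheaves, inserting one extra short exact sequence to reduce to $\cO_{V_i}$). The coefficient chase in the first part --- verifying that Snapper's total-degree bound really forces the monomials $x_1\cdots x_{d-1}x_d^{k}$ with $k\ge 2$ to vanish, so that the first difference isolates exactly the coefficient defining the intersection number --- is the other point requiring care; everything else is formal manipulation of Euler characteristics.
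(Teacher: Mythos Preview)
The paper does not prove \Cref{interpn}; it is stated as a quotation from Kleiman's paper \cite{Kleimannumericaltheory} (Section~2, Propositions~4 and~5 and their corollaries) and used as a black box in the proof of \Cref{Weilpseudoeffectivelemma}. So there is no ``paper's own proof'' to compare against.

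Your argument is correct and is essentially the standard proof one finds in Kleiman. For part~(1), the first-difference computation is exactly right: the degree bound $\deg P\le d$ kills the monomials $x_1\cdots x_{d-1}x_d^{k}$ for $k\ge 2$, so the constant-in-$x_d$ part of $P(\mathbf x,x_d)-P(\mathbf x,x_d-1)$ isolates the coefficient of $x_1\cdots x_{d-1}x_d$; note also that the paper's standing hypothesis (stated just before \Cref{Snapper}) is that $D$ is an \emph{effective} Cartier divisor, so your short exact sequence is available without further reduction. For part~(2), the d\'evissage-plus-localization argument is the standard one, and your multiplicity count is correct; the ``in particular'' clause is just the observation that for $\iF$ invertible the local lengths $l_i$ agree with those of $\cO_Y$, so the formula reduces to the definition of $(\iL_1\cdots\iL_d):=(\cO_Y;\iL_1\cdots\iL_d)$.
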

	
\begin{proof}[Proof of \Cref{Weilpseudoeffectivelemma}]
    First we note that if $\lambda_e = 0$ for any $e$ then $D + D_e \sim_{\QQ} 0$. Since both $D$ and $D_e$ are effective and $X$ is projective, this implies that $D = D_e = 0$ as required. Therefore, we may assume that $\lambda_e $ is positive for all $e$.
    
    For each $e \geq 1$, write $\lambda _e = \frac{a_e}{b_e}$ for postive integers $a_e$ and $b_e$. Moreover, by replacing both $a_e $ and $b_e$ by a positive multiple, we may assume that $b_e D$ and $b_e D_e$ are both $\ZZ$-Weil divisors. Then, since $D + D_e \sim_\QQ \lambda_e L$, we may assume that $b_e(D+ D_e) \sim a_e L$. In particular, $Z_e := b_e (D+ D_e)$ is an effective Cartier divisor on $X$. Now, using Part (1) of \Cref{interpn} we see that 
    \begin{equation} \label{intersectionequality}
       a_e (L \cdot \ldots \cdot L \cdot L) = (L \cdot \ldots \cdot L \cdot \cO_{Y}(Z_e )) =   ( L|_{Z_e} \cdot \ldots \cdot L|_{Z_e}).
    \end{equation}
    Now suppose $D \neq 0$, then let $E$ be a non-zero prime divisor in the support of $D$. Then, we have $Z_e \geq b_e E$ for each $e \geq 1$. Moreover, since $L$ is ample, the restriction of $L$ to any prime divisor of $X$ has a positive intersection number. Using this observation and Part (2) of \Cref{interpn}, we have
    \begin{equation} \label{intersectioninequality}
        ( L|_{Z_e} \cdot \ldots \cdot L|_{Z_e}) \geq b_e (L|_E \cdot \ldots \cdot L|_E).
    \end{equation}
   Now, \Cref{intersectionequality} and \Cref{intersectioninequality} together imply that $\lim_{e \to \infty} \frac{a_e}{b_e} \geq  \frac{(L|_E \cdot \ldots \cdot L|_E)}{(L \cdot \ldots \cdot L \cdot L)} > 0$, which is a contradiction to the assumption that $\lim_{e \to \infty} \lambda_e = 0$. Thus we must have $D = 0$, which completes the proof. 
\end{proof}

 \bibliographystyle{alpha}
    \bibliography{Main}
\end{document}